\numberwithin{equation}{section}
\newtheorem{theorem}{Theorem}[section] 
\newtheorem{proposition}[theorem]{Proposition}
\newtheorem{corollary}[theorem]{Corollary} 
\theoremstyle{definition} 
\newtheorem{definition}[theorem]{Definition} 
\newtheorem{remark}[theorem]{Remark}
\newcommand\Cscr{\mathscr{C}}
\newcommand\Oscr{\mathscr{O}}
\newcommand\C{\mathbb{C}} 
\newcommand\D{\overline{\mathbb D}} 
\renewcommand\D{\mathbb D}
\newcommand\N{\mathbb{N}} 
\newcommand\R{\mathbb{R}}
\newcommand\U{\mathbb{U}} 
\newcommand\Z{\mathbb{Z}}
\newcommand\igot{\mathfrak{i}}
\renewcommand\igot{\mathfrak{i}}
\newcommand\E{\mathrm{e}} 
\newcommand\I{\imath} 
\renewcommand\imath{\igot}
\newcommand\lra{\longrightarrow} 
\newcommand\longhookrightarrow{\ensuremath{\lhook\joinrel\relbar\joinrel\rightarrow}} 
\newcommand\di{\partial} 
\newcommand\dibar{\overline\partial}
\newcommand\Id{\mathrm{Id}}
\def\Ell1{\mathrm{Ell_1}} 
\def\CEll1{\mathrm{CEll_1}}
\def\Jst{J_{\mathrm{st}}}
\newcommand\Pic{\mathrm{Pic}} 
\newcommand\Hom{\mathrm{Hom}}
\begin{document} 

\title{The nonhomogeneous Cauchy--Riemann equation on \\ families of open Riemann surfaces}

\author{Franc Forstneri{\v c}}

\address{Franc Forstneri\v c, Faculty of Mathematics and Physics, University of Ljubljana, and Institute of Mathematics, Physics, and Mechanics, Jadranska 19, 1000 Ljubljana, Slovenia}
\email{franc.forstneric@fmf.uni-lj.si}

\subjclass[2020]{Primary 32W05; secondary 32Q56, 32L20}


\date{19 August 2025}

\keywords{Riemann surface, Cauchy--Riemann equation, 
Oka--Grauert principle, Oka manifold} 

\begin{abstract}
In this paper we use the nonhomogeneous Beltrami equation 
to give an optimal solution to the nonhomogeneous Cauchy--Riemann equation
for continuous or smooth families of complex structures and $(0,1)$-forms
of a H\"older class on a smooth open orientable surface. As an application, we obtain the Oka--Grauert principle for complex line bundles on families
of open Riemann surfaces. 
\end{abstract}

\maketitle

%
%
\section{The $\dibar$-equation on families of open Riemann surfaces}
\label{sec:dibar}

In this paper, we show that the nonhomogeneous Cauchy--Riemann
equation, or the $\dibar$-equation for short, can be solved for very
general families of complex structures and $(0,1)$-forms of a H\"older class
on a smooth open orientable surface, with the usual gain of one derivative 
in the space variable and without any loss of regularity in the parameter. 
Our main results are Theorem \ref{th:dibar} and its global version, 
Corollary \ref{cor:dibar}. The proof uses the
nonhomogeneous Beltrami equation 
together with the Runge approximation theorem for families
of holomorphic functions on families of open Riemann surfaces
(see \cite[Theorem 1.1]{Forstneric2024Runge}). An application 
is a Dolbeault cohomology vanishing theorem 
(see Proposition \ref{prop:Dolbeault}) and the Oka principle
for complex line bundles on such families (see Theorem \ref{th:OPLB}).

We begin by introducing the setup, referring
to \cite{Forstneric2024Runge} for more details.
Let $X$ be a smooth orientable surface.
A complex structure on $X$ is an endomorphism $J$ of its tangent bundle 
$TX$ satisfying $J^2:=J\circ J = -\Id$. Thus, $J$ is a section of the smooth 
vector bundle $T^*X\otimes TX\cong \Hom(TX,TX) \to X$ whose fibre over 
$x\in X$ is the space $\Hom(T_xX,T_xX)$ of linear maps $T_x X\to T_xX$.
We endow the plane $\R^2\cong \C$ with the standard complex
structure $\Jst$ given in standard basis by the matrix
$\left(\begin{smallmatrix} 0 & -1\\ 1 & 0\end{smallmatrix}\right)$, 
and in complex notation by multiplication by $\imath=\sqrt{-1}$.  
A differentiable function $f:U\to\C$ on an open set $U\subset X$ 
is said to be {\em $J$-holomorphic} (or $(J,\Jst)$-holomorphic) 
if the Cauchy--Riemann equation 
$df_x \circ J_x = \Jst \circ df_x=\imath\, df_x$ holds 
for every $x\in U$. We say that $J$ is of local H\"older class 
$\Cscr^{(k,\alpha)}$ for some $k\in\Z_+=\{0,1,2,\ldots\}$ and $0<\alpha<1$
if for any relatively compact domain $\Omega\Subset X$, the restriction
$J|_\Omega\in \Gamma^{(k,\alpha)}(\Omega,T^*\Omega\otimes T\Omega)$ 
is a section of $T^*\Omega\otimes T\Omega$ of class 
$\Cscr^{(k,\alpha)}(\Omega)$, i.e., with finite $\Cscr^{(k,\alpha)}(\Omega)$
norm. (H\"older norms are defined 
with respect to a smooth Riemannian metric on $X$;
see \cite[Sect. 4.1]{GilbargTrudinger1983}.
If $\Omega$ has $\Cscr^1$ boundary, as will be the case 
in our results, then a function $f\in \Cscr^{(k,\alpha)}(\Omega)$
extends to a unique function in $\Cscr^{(k,\alpha)}(\overline\Omega)$.
The same holds for complex structures.) 
For such $J$, there is an atlas $\{(U_i,\phi_i)\}_i$ of 
open sets $U_i\subset X$ with $\bigcup_{i} U_i=X$ and 
$J$-holomorphic charts $\phi_i :U_i \to \phi_i(U_i)\subset \C$ 
of class $\Cscr^{(k+1,\alpha)}(U_i)$; 
see \cite[Theorem 5.3.4]{AstalaIwaniecMartin2009}. 
Since the transition maps $\phi_i\circ\phi_j^{-1}$
are $\Jst$-biholomorphic, 
$J$ determines on $X$ the structure of a Riemann surface,  
and every $J$-holomorphic function is of local class $\Cscr^{(k+1,\alpha)}$
in the given smooth structure on $X$. Since the inverse
of a diffeomorphism of local class $\Cscr^{(k+1,\alpha)}$
is of the same class (see Norton \cite{Norton1986} and 
Bojarski et al.\ \cite[Theorem 2.1]{BojarskiAll2005}), 
the smooth structure on $X$ determined by a complex structure $J$ 
of local class $\Cscr^{(k,\alpha)}$ is $\Cscr^{(k+1,\alpha)}$ compatible 
with the given smooth structure. In the sequel, we shall often admit
the adjective "local" when speaking of complex structures or functions of
H\"older classes. 

Let $B$ be a topological space whose nature will depend 
on the integer $l\ge 0$ to be specified.
When $l=0$, we assume that $B$ is a paracompact Hausdorff space, 
and if $l>0$ then $B$ will be a (paracompact) manifold of class $\Cscr^l$. 
A family $\{J_b\}_{b\in B}$ of complex structures on $X$
is said to be of class $\Cscr^{l,(k,\alpha)}$ if 
for any relatively compact domain $\Omega\Subset X$, the map 
$B\ni b\mapsto J_b|_{\Omega}\in \Gamma^{(k,\alpha)}
(\Omega,T^*\Omega\otimes T\Omega)$ is of class $\Cscr^l$
as a map to the H\"older space 
$\Gamma^{(k,\alpha)}(\Omega,T^*\Omega\otimes T\Omega)$.
Following Kodaira and Spencer \cite{KodairaSpencer1958}
and Kirillov \cite{Kirillov1976}, the collection $\{(X,J_b)\}_{b\in B}$
is called a family of Riemann surfaces of class 
$\Cscr^{l,(k,\alpha)}$. Such a family $\{J_b\}_{b\in B}$ can equivalently 
be given by a family $\{\mu_b\}_{b\in B}$ of maps from 
$X$ to the unit disc $\D=\{\zeta\in\C:|\zeta|<1\}$
of the same smoothness class $\Cscr^{l,(k,\alpha)}$;
see \cite[Sect.\ 2]{Forstneric2024Runge}. 
A continuous map $f:B\times X\to Y$ to a complex manifold $Y$ 
is said to be {\em fibrewise holomorphic} or {\em $X$-holomorphic} 
if the map $f_b=f(b,\cdotp):X\to Y$ is $J_b$-holomorphic
for every $b\in B$. Assuming that $\{J_b\}_{b\in B}$ is of class
$\Cscr^{l,(k,\alpha)}$, the space $Z=B\times X$ 
endowed with the complex structure
$J_b$ on the fibre $\{b\}\times X$ admits fibre preserving 
$X$-holomorphic charts of class $\Cscr^{l,(k+1,\alpha)}$ 
with values in $B\times \C$ which are local in $b\in B$
and semiglobal in the space variable $x\in X$
(see \cite[Theorem 4.1]{Forstneric2024Runge}), and 
every $X$-holomorphic function $f:B\times X\to\C$ of class 
$\Cscr^{l,0}$ (i.e., $f$ admits continuous partial derivatives 
of order $\le l$ with respect to the parameter $b\in B$) 
is of local class $\Cscr^{l,(k+1,\alpha)}$
(see \cite[Lemma 5.6]{Forstneric2024Runge}). 

Assume now that $X$ is a smooth {\em open} surface endowed
with a complex structure $J$ of class $\Cscr^{(k,\alpha)}$
$(k\in \Z_+,\ 0<\alpha<1)$.
By Gunning and Narasimhan \cite{GunningNarasimhan1967} 
there is a $J$-holomorphic immersion $z:X\to \C$.
By what was said above, $z$ is of local class 
$\Cscr^{(k+1,\alpha)}$ in the given smooth structure on $X$, and 
it provides a local holomorphic coordinate on $X$ at every point. 
Its differentials $dz$ and $d\bar z$ are of class $\Cscr^{(k,\alpha)}$, 
and they trivialise the respective cotangent bundles 
$T^{*(1,0)}_J X$ and $T^{*(0,1)}_J X$ 
consisting of $(1,0)$-forms and $(0,1)$-forms with respect to $J$. 
We have $\C\otimes_\R T^*X= T^{*(1,0)}_J X\oplus T^{*(0,1)}_J X$,
and every $1$-form $\beta$ on $X$ can be uniquely written
as $\beta = A dz + B d\bar z$ for a pair of functions $A,B:X\to \C$.
Note that $\beta$ is of class $\Cscr^{(k,\alpha)}$ if and only if 
the functions $A,B$ are of class $\Cscr^{(k,\alpha)}$. 
Given a differentiable function $f:X\to \C$, its differential splits
in the $(1,0)$ and $(0,1)$ parts: 
\begin{equation}\label{eq:derivatives}
	df=\di_J f+\dibar_J f=f_z \,\cdotp dz + f_{\bar z} \,\cdotp d\bar z.  
\end{equation}
Given a $(0,1)$-form $\beta=u \, d\bar z$ on a domain $\Omega\subset X$,
the $\dibar_J$-equation asks for a function $f:\Omega\to \C$ 
satisfying $\dibar_J f=\beta$; equivalently, $f_{\bar z}=u$.
If $\Omega$ is relatively compact and has sufficiently regular boundary
then this elliptic equation is solvable in many function
spaces with a gain of one derivative; see e.g.\  
Ahlfors \cite{Ahlfors2006} and Astala et al.\ \cite{AstalaIwaniecMartin2009}.

We shall prove the following result, which gives families 
of solutions of the $\dibar$-equation 
for families of complex structures and $(0,1)$-forms
on domains in a smooth open surface. 

%
%
\begin{theorem}\label{th:dibar}
Assume that $X$ is a smooth open orientable surface, 
$\Omega\Subset X$ is a relatively compact domain with 
$\Cscr^{(k+1,\alpha)}$ boundary for some $k\in\Z_+$ and
$0<\alpha<1$, $l\in \Z_+$, $B$ is a paracompact Hausdorff
space if $l=0$ and a $\Cscr^l$ manifold if $l>0$,
and $\{J_b\}_{b\in B}$ is a family of complex structures 
of class $\Cscr^{l,(k,\alpha)}(B\times \overline\Omega)$ 
on $\overline\Omega$.
Given a family of $(0,1)$-forms $\{\beta_b\}_{b\in B}$ 
on $\overline\Omega$ of class 
$\Cscr^{l,(k,\alpha)}(B\times \overline\Omega)$, 
there is a function 
$f\in \Cscr^{l,(k+1,\alpha)}(B\times\overline\Omega)$ satisfying
\begin{equation}\label{eq:dibar}
	\dibar_{J_b} f(b,\cdotp) = \beta_b \ \ 
	\text{on $\overline \Omega$ for every $b\in B$.}
\end{equation}
\end{theorem}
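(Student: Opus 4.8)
The plan is to reduce the $\dibar$-equation for the variable complex structure $J_b$ to the \emph{nonhomogeneous Beltrami equation} with respect to a fixed background structure, and then solve the latter using the Cauchy transform together with the family Runge approximation theorem of \cite{Forstneric2024Runge}. Concretely, fix a $J_{b_0}$-holomorphic immersion $z=z_{b_0}:X\to\C$ from \cite{GunningNarasimhan1967} (or work in the semiglobal $X$-holomorphic charts of class $\Cscr^{l,(k+1,\alpha)}$ provided by \cite[Theorem 4.1]{Forstneric2024Runge} and patch), so that on $\overline\Omega$ we have a coordinate in which each $J_b$ is encoded by a Beltrami coefficient $\mu_b:\overline\Omega\to\D$ with $\|\mu_b\|_\infty\le \kappa<1$ (this is where relative compactness of $\Omega$ is used), and the dependence $b\mapsto\mu_b$ is of class $\Cscr^{l,(k,\alpha)}$. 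In this coordinate a function $f$ satisfies $\dibar_{J_b}f=\beta_b$ if and only if $f_{\bar z}-\mu_b\,f_z=g_b$, where $g_b$ is the coefficient of $\beta_b$ after the change of frame; since $\beta_b$ is of class $\Cscr^{l,(k,\alpha)}$ and $\mu_b$ too, $g_b$ is again of class $\Cscr^{l,(k,\alpha)}$.

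Next I would solve the nonhomogeneous Beltrami equation $f_{\bar z}-\mu_b f_z=g_b$ on a slightly larger disc $\D_R\Supset\overline\Omega$ (after extending $\mu_b$ and $g_b$ to $\D_R$ keeping $\|\mu_b\|_\infty<1$ and the regularity, e.g.\ by a Whitney/Seeley extension that is linear and hence preserves $\Cscr^l$ dependence on $b$). Write $f=\bC h$ where $\bC$ is the Cauchy transform and $h$ is the unknown; then $f_{\bar z}=h$ and $f_z=\bS h$ with $\bS$ the Beurling--Ahlfors transform, so the equation becomes $(\Id-\mu_b\bS)h=g_b$. Because $\|\mu_b\bS\|_{L^p\to L^p}\le \kappa\|\bS\|_{L^p\to L^p}<1$ for $p$ close to $2$, the operator $\Id-\mu_b\bS$ is invertible by a Neumann series, $h_b=\sum_{n\ge0}(\mu_b\bS)^n g_b$; this series converges in $L^p$ \emph{uniformly in $b$ on compacts}, and since each summand depends on $b$ of class $\Cscr^l$ (finite products and sums of $\Cscr^l$-maps into the Banach algebra of operators/functions, using that $b\mapsto\mu_b$ is $\Cscr^l$ with values in $\Cscr^{(k,\alpha)}\hookrightarrow L^\infty$), one gets $b\mapsto h_b\in\Cscr^l(B,L^p)$. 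Elliptic regularity for the Beltrami equation (see \cite{AstalaIwaniecMartin2009}) upgrades this: $h_b\in\Cscr^{(k,\alpha)}$, hence $f_b=\bC h_b\in\Cscr^{(k+1,\alpha)}$, with the estimates uniform on compacts in $b$, so in fact $f\in\Cscr^{l,(k,\alpha)}(B\times\overline{\D_R})$ and then $\Cscr^{l,(k+1,\alpha)}$ by the gain-of-one-derivative for the Cauchy transform; differentiating the Neumann series term by term in $b$ (justified by the uniform convergence in the relevant Hölder norms) gives the $\Cscr^l$ dependence of all derivatives.

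Finally I would restrict this $f$ to $B\times\overline\Omega$; it solves \eqref{eq:dibar} there. A subtlety, and where the family Runge theorem \cite[Theorem 1.1]{Forstneric2024Runge} enters, is the passage from the local/semiglobal coordinate picture back to the intrinsic formulation on $X$: if one is forced to use several overlapping $X$-holomorphic charts of \cite[Theorem 4.1]{Forstneric2024Runge} rather than a single global immersion, the locally constructed solutions differ by $X$-holomorphic functions (of class $\Cscr^{l,(k+1,\alpha)}$), and patching them into one global solution on $\overline\Omega$ requires solving a Cousin-type/$\dibar$ problem on the nerve — handled by the usual bump-function argument combined with family Runge approximation to make the corrections holomorphic where needed, with control of all parameter-derivatives. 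The \textbf{main obstacle} I anticipate is precisely this bookkeeping of regularity in the parameter $b$ through the Neumann series and the elliptic regularity/patching steps: one must verify that every operation used — extension, Cauchy and Beurling transforms, inversion of $\Id-\mu_b\bS$, and the Runge corrections — is not merely bounded but depends on $b$ of class $\Cscr^l$ in the appropriate Hölder topology, uniformly on compact subsets of $B$, and that no regularity in $x$ is lost while exactly one derivative is gained. The fixed (ellipticity) constant $\kappa<1$ coming from $\overline\Omega\Subset X$ is what makes the Neumann series argument robust under the parameter.
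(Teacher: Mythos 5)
Your overall strategy---reduce $\dibar_{J_b}f=\beta_b$ to a nonhomogeneous Beltrami equation $f_{\bar z}-\mu_b f_z=g_b$ with respect to a holomorphic immersion serving as a global coordinate, and solve it with the Cauchy and Beurling transforms via a Neumann series---is essentially the paper's route. But there is a genuine gap at the central analytic step. You invert $\Id-\mu_b\bS$ by the Neumann series in $L^p$ for $p$ close to $2$, using only $\|\mu_b\|_\infty\le\kappa<1$, and then claim the series may be differentiated term by term in $b$ because of ``uniform convergence in the relevant H\"older norms''. That convergence is not available: the Beurling transform is bounded on $\Cscr^{(k,\alpha)}$, but its operator norm there is not $1$ (nor close to it), so $\|\mu_b\bS\|_{\Cscr^{(k,\alpha)}\to\Cscr^{(k,\alpha)}}$ can exceed $1$ even when $\|\mu_b\|_\infty$ is small, and the Neumann series need not converge in the H\"older operator topology. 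Your argument therefore yields $h_b\in L^p$ with $\Cscr^l$ dependence on $b$ in the $L^p$ topology, plus pointwise-in-$b$ H\"older regularity of each $h_b$ by Schauder theory, but it does not yield what the theorem asserts, namely $\Cscr^l$ dependence of $b\mapsto h_b$ (hence of $f_b=\bC h_b$) in the $\Cscr^{(k,\alpha)}$ (resp.\ $\Cscr^{(k+1,\alpha)}$) topology. To close the gap you would need to show that $\Id-\mu\bS$ is invertible on $\Cscr^{(k,\alpha)}(\overline\Omega)$ for all $\mu$ with $\|\mu\|_\infty<1$ and that the inverse depends analytically (or at least $\Cscr^l$-smoothly) on $\mu$ in that topology---a nontrivial elliptic-regularity-with-parameters statement your sketch does not supply.

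The paper sidesteps this by making the problem local in the parameter: for each $b_0$ it takes the reference structure to be an extension of $J_{b_0}$ itself, so that $\mu_{b_0}=0$ and, after shrinking the parameter neighbourhood, $\|\mu_b\|_{k,\alpha}<c$ with $c$ so small that $\|\mu_b S\|<1$ in the $\Cscr^{(k,\alpha)}$ \emph{operator} norm. The Neumann series then converges in that norm, $(\Id-\mu S)^{-1}$ is analytic in $\mu$ as a H\"older-space operator, and $f\in\Cscr^{l,(k+1,\alpha)}$ follows at once; the local-in-$b$ solutions are glued by a $\Cscr^l$ partition of unity on $B$, which is harmless because the equation is linear and the partition involves only the parameter. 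Two further points: the right-hand side $g_b$ must be computed explicitly (in the paper, $g_\mu=(1-|\mu|^2)\overline{(h_\mu)_z}\,u_\mu$, where $\beta_\mu=u_\mu\,d\overline{h_\mu}$ and $\{h_\mu\}$ is a family of $J_\mu$-holomorphic immersions produced by the same Neumann-series device), a step you gloss over with ``after the change of frame''; and the Runge/Cousin patching you invoke at the end is not needed for Theorem \ref{th:dibar} at all---a single Gunning--Narasimhan immersion is a global coordinate, and the Runge theorem enters only in Corollary \ref{cor:dibar}, where $X$ is exhausted by compact domains.
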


Theorem \ref{th:dibar}, together with the Runge approximation theorem in
\cite[Theorem 1.1]{Forstneric2024Runge}, implies the following corollary
concerning solutions of families of global $\dibar$-equations.

%
%
\begin{corollary}\label{cor:dibar}
Assume that $\{J_b\}_{b\in B}$ is a family of complex structures 
of class $\Cscr^{l,(k,\alpha)}$ on a smooth open orientable surface $X$,
where $l,k\in \Z_+$, $l\le k+1$, $0<\alpha<1$,  
and $B$ is as in Theorem \ref{th:dibar}. 
Given a family $\{\beta_b\}_{b\in B}$ of $(0,1)$-forms 
$\beta_b\in \Gamma(X,T^{*(0,1)}_{J_b}X)$ of class 
$\Cscr^{l,(k,\alpha)}$, there is a function $f:B\times X\to\C$
of class $\Cscr^{l,(k+1,\alpha)}$ satisfying
\begin{equation}\label{eq:dibar1}
	\dibar_{J_b} f(b,\cdotp) = \beta_b \ \ 
	\text{on $X$ for every $b\in B$.}
\end{equation}
\end{corollary}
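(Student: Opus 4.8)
The plan is to deduce Corollary \ref{cor:dibar} from Theorem \ref{th:dibar} by a standard exhaustion-and-correction argument, combining the local (compact) solutions provided by the theorem with the parametric Runge approximation theorem \cite[Theorem 1.1]{Forstneric2024Runge} to patch them into a global solution. First I would fix a smooth exhaustion $\overline\Omega_1\Subset\overline\Omega_2\Subset\cdots$ of $X$ by relatively compact domains with $\Cscr^{(k+1,\alpha)}$ boundaries whose union is $X$ (such an exhaustion exists since $X$ is open and orientable; one can take sublevel sets of a suitable Morse exhaustion function, smoothing the boundaries). For later use I would also arrange, using that $X$ is an open surface so $H^2(X;\Z)=0$ and each $\overline\Omega_j$ has the homotopy type of a finite wedge of circles, that there is no topological obstruction; in fact we will not even need this since the corrections will be solved directly.

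The construction proceeds by induction, producing functions $f_j\in\Cscr^{l,(k+1,\alpha)}(B\times\overline\Omega_j)$ with $\dibar_{J_b}f_j(b,\cdotp)=\beta_b$ on $\overline\Omega_j$ for all $b$, together with a fixed summable sequence $\epsilon_j>0$, such that $f_{j+1}$ approximates $f_j$ to within $\epsilon_j$ in the $\Cscr^{l,(k+1,\alpha)}(B\times\overline\Omega_{j-1})$ norm over compacts in $B$ (or, when $l=0$, uniformly on compacts of $B\times\overline\Omega_{j-1}$). To start, set $f_1$ to be the solution on $\overline\Omega_1$ given by Theorem \ref{th:dibar}. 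For the inductive step, suppose $f_j$ is constructed. Apply Theorem \ref{th:dibar} on $\overline\Omega_{j+1}$ to get some $g\in\Cscr^{l,(k+1,\alpha)}(B\times\overline\Omega_{j+1})$ with $\dibar_{J_b}g(b,\cdotp)=\beta_b$ there. Then $g-f_j$ is $X$-holomorphic on $B\times\overline\Omega_j$ and of class $\Cscr^{l,0}$, hence of local class $\Cscr^{l,(k+1,\alpha)}$ by \cite[Lemma 5.6]{Forstneric2024Runge}. By the parametric Runge approximation theorem \cite[Theorem 1.1]{Forstneric2024Runge}, since $\overline\Omega_j$ may be chosen Runge in $\overline\Omega_{j+1}$ (arrange the exhaustion so that $X\setminus\overline\Omega_j$ has no relatively compact connected components, i.e., the $\overline\Omega_j$ are $\Ocal(X)$-convex; this is possible on an open Riemann surface and the property is independent of which compatible complex structure $J_b$ we use, since it is topological), we can approximate $g-f_j$ as closely as we wish in the relevant norm by an $X$-holomorphic function $h\in\Cscr^{l,(k+1,\alpha)}(B\times\overline\Omega_{j+1})$. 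Set $f_{j+1}=g-h$; then $\dibar_{J_b}f_{j+1}=\beta_b$ on $\overline\Omega_{j+1}$, and $f_{j+1}-f_j=(g-f_j)-h$ is small on $B\times\overline\Omega_{j-1}$, completing the induction.

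Finally I would take the limit: on each $B\times\overline\Omega_m$ the sequence $(f_j)_{j\ge m+1}$ is Cauchy in $\Cscr^{l,(k+1,\alpha)}$ over compacts in $B$ by the telescoping estimate $\sum_{j\ge m}\epsilon_j<\infty$, so it converges to a limit $f^{(m)}$; these limits agree on overlaps, defining a function $f:B\times X\to\C$ of class $\Cscr^{l,(k+1,\alpha)}$. Since $\dibar_{J_b}$ is a first-order operator and $f_j\to f$ in $\Cscr^{l,(k+1,\alpha)}$ on compacts with $\dibar_{J_b}f_j(b,\cdotp)=\beta_b$ on $\overline\Omega_m$ for all $j>m$, passing to the limit gives $\dibar_{J_b}f(b,\cdotp)=\beta_b$ on $\overline\Omega_m$ for every $b$; as $m$ is arbitrary, equation \eqref{eq:dibar1} holds on all of $X$. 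The hypothesis $l\le k+1$ is exactly what is needed so that $\Cscr^{l,(k+1,\alpha)}$ solutions and $X$-holomorphic corrections have matching regularity in the two variables and the Runge theorem \cite[Theorem 1.1]{Forstneric2024Runge} applies in this class.

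I expect the main obstacle to be bookkeeping the norms correctly: ensuring that the Runge approximation in \cite[Theorem 1.1]{Forstneric2024Runge} can be carried out in the $\Cscr^{l,(k+1,\alpha)}$ topology (not merely uniformly) so that the telescoping series converges in the claimed regularity, and verifying that the $\Ocal(X)$-convex exhaustion can be chosen with $\Cscr^{(k+1,\alpha)}$ boundaries simultaneously. The first point is handled because the difference $g-f_j$ is already known to be of local class $\Cscr^{l,(k+1,\alpha)}$ and the cited Runge theorem approximates in exactly this class on $\Ocal(X)$-convex compacts; the second is a routine smoothing of a topological exhaustion. The rest of the argument is the classical Mittag-Leffler/Weierstrass-type patching and carries no surprises.
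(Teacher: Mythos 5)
Your proposal is correct and follows essentially the same route as the paper: the paper's proof of Corollary \ref{cor:dibar} is precisely the exhaustion-and-correction scheme you describe, solving on an increasing sequence of Runge domains via Theorem \ref{th:dibar} and using the parametric Runge approximation theorem of \cite[Theorem 1.1]{Forstneric2024Runge} at each step to make the solutions converge, following the standard scheme of Cartan's Theorem B. The details you fill in (the $\Ocal(X)$-convex exhaustion, the telescoping estimates, and the role of the hypothesis $l\le k+1$) match what the paper delegates to \cite[V3, proof of Theorem 9.1]{Forstneric2024Runge}.
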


The condition $l\le k+1$ in the corollary is due to the use of the Runge
approximation theorem for fibrewise holomorphic functions
on families of open Riemann surfaces 
\cite[Theorem 1.1]{Forstneric2024Runge}, which 
was proved under this assumption. 
See also Remark \ref{rem:Cartan} concerning the $\Cscr^\infty$ case. 
This restriction is unnecessary in Theorem \ref{th:dibar}.

Except perhaps for the condition $l\le k+1$ in the corollary,
these results are optimal since we have the expected gain of one derivative 
in the space variable and no loss of regularity
in the parameter $b\in B$. Note that the 1-forms 
$\beta_b$ and the function $f$ in \eqref{eq:dibar}, \eqref{eq:dibar1}
are expressed in terms of the smooth coordinates on $X$. Expressing
them in terms of local fibrewise holomorphic coordinates leads 
to a loss of $2l$ derivatives in the space variable if $l>0$; 
see \cite[V3, Theorem 9.1]{Forstneric2024Runge}.

\begin{proof}[Proof of Theorem \ref{th:dibar}]
We shall use the connection between complex
structures and Beltrami multipliers; see 
\cite[Sect.\ 2]{Forstneric2024Runge} or any 
standard text on quasiconformal maps for the details.

Choose a smooth complex structure $J$ on $X$ and 
a $J$-holomorphic immersion $z:X\to\C$.
Let $\Omega\Subset X$ be as in the theorem. 
Every function $\mu:\overline \Omega\to\D$ 
of class $\Cscr^{(k,\alpha)}(\overline \Omega)$  with values
in the unit disc $\D$ 
determines a complex structure $J_\mu$ on $\overline\Omega$ 
of the same class, with $\mu=0$ corresponding to $J|_{\overline\Omega}$. 
Conversely, every complex structure $J'$ on $\overline \Omega$
of class $\Cscr^{(k,\alpha)}$ in the same orientation class 
as $J$ equals $J_\mu$ for a unique 
$\mu\in \Cscr^{(k,\alpha)}(\overline \Omega,\D)$. 
A differentiable function $f:\overline\Omega\to\C$ satisfies 
$\dibar_{J_\mu}f=0$, and hence is $J_\mu$-holomorphic on
$\Omega$, if and only if it satisfies the homogeneous Beltrami equation 
$f_{\bar z} = \mu f_z$, where the partial derivatives $f_z$ and 
$f_{\bar z}$ are defined by \eqref{eq:derivatives}. The analogous
statements hold for globally defined complex structures on $X$
and function $\mu:X\to \D$.

To prove the theorem, it suffices to show that for every point $b_0\in B$ 
there is a neighbourhood $B_0\subset B$ of $b_0$ such that 
equation \eqref{eq:dibar} is solvable on $B_0\times \overline\Omega$ with 
$f\in \Cscr^{l,(k+1,\alpha)}(B_0\times \overline\Omega)$. 
By using $\Cscr^l$ partitions of unity on $B$ we then obtain 
a solution on $B\times \overline \Omega$, thereby proving the theorem.

Note that the complex structure $J_{b_0}$ on $\overline\Omega$
extends to a complex structure on $X$ of local class $\Cscr^{(k,\alpha)}$.
To see this, we represent 
$J_{b_0}$ in terms of $J$ by a Beltrami multiplier 
$\mu\in \Cscr^{(k,\alpha)}(\overline \Omega,\D)$. 
Since $b\Omega$ is of class $\Cscr^{(k+1,\alpha)}$,
$\mu$ extends from $\overline \Omega$ to a function
$\mu':X\to \D$ of class $\Cscr^{(k,\alpha)}$ 
with compact support 
(see Gilbarg and Trudinger \cite[Lemma 6.37]{GilbargTrudinger1983}). 
The associated complex structure $J_{\mu'}$ on $X$ is of local 
class $\Cscr^{(k,\alpha)}$ and it coincides with $J_\mu$ on 
$\overline \Omega$. For $k\ge 1$ the same conclusion 
holds if $b\Omega$ is of class $\Cscr^{(k,\alpha)}$.

This reduces the proof to the following proposition.
In this result, the smooth structure on $X$ is induced  
by the complex structure $J$, and the H\"older norms are 
computed with respect to a fixed smooth Riemannian metric on $X$
whose precise choice is unimportant. 

%
%
\begin{proposition}\label{prop:dibar}
Let $(X,J)$ be an open Riemann surface, 
and let $\Omega\Subset X$, $k$, and $\alpha$ be as in Theorem \ref{th:dibar}.  
For $\mu\in \Cscr^{(k,\alpha)}(\overline \Omega,\D)$ let $J_\mu$ 
denote the associated complex structure on $\overline\Omega$, 
with $J_0=J|_{\overline \Omega}$. For $c>0$ set 
$
	B_c=\{\mu \in \Cscr^{(k,\alpha)}(\overline\Omega) : 
	\|\mu\|_{k,\alpha} <c\}.
$
There exists $c>0$ such that for any map 
$B_c\ni \mu \mapsto \beta_\mu \in  
\Gamma^{(k,\alpha)}(\overline\Omega,T^{*(0,1)}_{J_\mu}\overline\Omega)$
of class $\Cscr^l$, $l\in \{0,1,\ldots,\infty,\omega\}$, 
there is a function 
$f\in \Cscr^{l,(k+1,\alpha)}(B_c\times \overline\Omega)$ 
such that for every $\mu \in B_c$
the function $f_\mu=f(\mu,\cdotp):\overline\Omega\to\C$ satisfies 
\begin{equation}\label{eq:dibar2}
	\dibar_{J_\mu} f_\mu = \beta_\mu. 
\end{equation}
\end{proposition}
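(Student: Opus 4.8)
The plan is to reduce the equation \eqref{eq:dibar2} for the varying complex structure $J_\mu$ to a standard $\dibar$-equation for the fixed structure $J$ on $\overline\Omega$, by writing everything in terms of the coordinate $z$ and then "straightening" the $J_\mu$-antiholomorphic derivative. Recall that a function $f$ is $J_\mu$-holomorphic exactly when $f_{\bar z}=\mu f_z$, so the natural first-order operator adapted to $J_\mu$ is $f\mapsto f_{\bar z}-\mu f_z$; concretely, if $\beta_\mu=u_\mu\, d\bar z$ (writing the $(0,1)_{J_\mu}$-form in terms of the global frame coming from $J$, after checking that $d\bar z$ spans $T^{*(0,1)}_{J_\mu}\overline\Omega$ for $\|\mu\|$ small), then $\dibar_{J_\mu}f_\mu=\beta_\mu$ becomes
\begin{equation}\label{eq:reduced}
	(f_\mu)_{\bar z} - \mu\,(f_\mu)_z = u_\mu \quad\text{on }\overline\Omega.
\end{equation}
First I would solve the homogeneous Beltrami equation with parameter: by the classical existence theory for Beltrami equations (Ahlfors \cite{Ahlfors2006}, Astala--Iwaniec--Martin \cite{AstalaIwaniecMartin2009}), extending $\mu$ to a compactly supported $\mu'$ on $X$ as in the reduction above and using a $J$-holomorphic immersion of $X$ into $\C$, there is for each $\mu\in B_c$ a $J_\mu$-holomorphic coordinate-type map $w^\mu:\overline\Omega\to\C$ (a quasiconformal homeomorphism onto its image) of class $\Cscr^{(k+1,\alpha)}$, and the solution-operator $\mu\mapsto w^\mu$ depends on $\mu$ with the same $\Cscr^l$-regularity, $l\in\{0,1,\dots,\infty,\omega\}$, because the normalised Beltrami solution is given by a convergent Neumann-type series in the Beurling transform, which is a real-analytic (indeed polynomial-in-$\mu$ termwise) construction, and each term gains one derivative. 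This is exactly the mechanism by which Theorem 4.1 and Lemma 5.6 of \cite{Forstneric2024Runge} produce fibrewise holomorphic charts of class $\Cscr^{l,(k+1,\alpha)}$, and I would cite those.

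Next, having the fibrewise coordinate $w^\mu$, pulling back by it converts $J_\mu$ to $\Jst$: a function $g$ on $w^\mu(\overline\Omega)$ satisfies $\dibar g=\gamma$ iff $f_\mu:=g\circ w^\mu$ satisfies $\dibar_{J_\mu}f_\mu=(w^\mu)^*\gamma$. So it remains to solve a \emph{fixed-structure} $\dibar$-equation $\dibar g_\mu=\gamma_\mu$ on the moving domains $\Omega_\mu:=w^\mu(\overline\Omega)$, where $\gamma_\mu$ is the pushforward of $\beta_\mu$ and depends $\Cscr^l$ on $\mu$. For this I would use the classical Cauchy--Green / Pompeiu integral: precompose so that all problems live on the fixed domain $\overline\Omega$, i.e. set $g_\mu = (T\tilde\gamma_\mu)\circ(\text{inclusion})$ where $T$ is the solid Cauchy transform $T h(z)=-\tfrac1\pi\int_\Omega \frac{h(\zeta)}{\zeta-z}\,dA(\zeta)$ acting on $\tilde\gamma_\mu:=((w^\mu)^{-1})^*\gamma_\mu\cdot|\mathrm{Jac}|$, the explicit pullback to $\overline\Omega$ of the coefficient of $\gamma_\mu$ relative to $d\bar\zeta$. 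The operator $T$ is bounded $\Cscr^{(k,\alpha)}(\overline\Omega)\to\Cscr^{(k+1,\alpha)}(\overline\Omega)$ (this is where the $\Cscr^{(k+1,\alpha)}$ boundary is used), satisfies $\di_{\bar z}(Th)=h$, and is a fixed continuous linear operator, so it commutes with differentiation in the parameter and with taking $\Cscr^l$-limits; hence $\mu\mapsto g_\mu$ inherits class $\Cscr^l$, the coefficient-data $\mu\mapsto\tilde\gamma_\mu$ being $\Cscr^l$ as a composition of the $\Cscr^l$-maps $\mu\mapsto w^\mu$, $\mu\mapsto(w^\mu)^{-1}$ (inversion of $\Cscr^{(k+1,\alpha)}$-diffeomorphisms preserves the class and depends continuously/smoothly, by Norton \cite{Norton1986} and \cite[Thm. 2.1]{BojarskiAll2005}), and $\mu\mapsto\beta_\mu$. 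Tracking the construction back through $g_\mu\mapsto f_\mu=g_\mu\circ w^\mu$ and assembling $f(\mu,\cdot)=f_\mu$ gives the desired $f\in\Cscr^{l,(k+1,\alpha)}(B_c\times\overline\Omega)$ solving \eqref{eq:dibar2}, after possibly shrinking $c$ so that $w^\mu$ stays a diffeomorphism with uniformly controlled norms.

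The main obstacle is \textbf{bookkeeping of regularity in the parameter through the two nonlinear steps}: showing that $\mu\mapsto w^\mu$ and $\mu\mapsto(w^\mu)^{-1}$ are $\Cscr^l$ into the H\"older space $\Gamma^{(k+1,\alpha)}$, uniformly on $B_c$, and that this regularity survives the pullback/pushforward operations on the $(0,1)$-forms $\beta_\mu$. The delicate point is that one must \emph{not} lose derivatives: the Beltrami solution gains one derivative over $\mu$ and the inverse-function estimate is borderline, so the uniform control of $\|w^\mu\|_{k+1,\alpha}$ and of its dependence on $\mu$ must be extracted carefully from the Neumann series (for $l=\omega$) or by differentiating the defining equation $\di_{\bar z}w^\mu=\mu\,\di_z w^\mu$ in $\mu$ and inverting the resulting linear Beltrami-type operator (for finite or $\infty$ $l$). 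All of this is precisely the content of \cite[Theorem 4.1, Lemma 5.6]{Forstneric2024Runge}, so in the write-up I would phrase the argument as a reduction to those statements together with the classical mapping properties of $T$ on $\Cscr^{(k,\alpha)}(\overline\Omega)$, and the smallness of $c$ is only needed to keep $w^\mu$ a diffeomorphism and to make the frame $d\bar z$ span $T^{*(0,1)}_{J_\mu}$.
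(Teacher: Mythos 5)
Your first half is on target and matches the paper: the fibrewise holomorphic immersion $w^\mu$ (the paper's $h_\mu=z+P((I-\mu S)^{-1}\mu)$) is constructed exactly as you describe, via the Neumann series for $(I-\mu S)^{-1}$ with $P$ the Cauchy and $S$ the Beurling operator, and the equation is indeed reduced to a nonhomogeneous Beltrami equation $f_{\bar z}-\mu f_z=(\text{RHS})$. Two corrections there: $d\bar z$ does \emph{not} span $T^{*(0,1)}_{J_\mu}\overline\Omega$ for $\mu\neq 0$ (it has a nonzero $(1,0)_{J_\mu}$ component); the correct frame is $d\overline{h_\mu}=\overline{g_\mu}(\bar\mu\,dz+d\bar z)$ with $g_\mu=(h_\mu)_z$, and the reduced equation acquires the factor $(1-|\mu|^2)\overline{g_\mu}$ on the right-hand side. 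That is a fixable bookkeeping slip.

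The genuine gap is in your second half. After obtaining the reduced equation you detour through the coordinate change: solve a standard $\dibar$-equation on the moving domains $\Omega_\mu=w^\mu(\overline\Omega)$ and compose back, $f_\mu=g_\mu\circ w^\mu$. This fails to deliver the claimed optimal regularity for $l\ge 1$, for two reasons. First, the Cauchy transform over $\Omega_\mu$ is not a fixed bounded operator: after the change of variables the kernel $1/(w^\mu(\xi)-w^\mu(x))$ depends on $\mu$, so your assertion that $T$ "commutes with differentiation in the parameter" does not apply; and as literally written, applying the fixed $T$ over $\Omega$ to $\tilde\gamma_\mu$ solves $\partial_{\bar\xi}F=\tilde\gamma_\mu$ in the $J$-coordinate, which is not the equation you need. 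Second, and more fundamentally, the composition operators $(g,w)\mapsto g\circ w$ and $w\mapsto w^{-1}$ on H\"older spaces are not $\Cscr^l$ without losing space derivatives: differentiating $g\circ w^\mu$ in $\mu$ produces $(\partial g\circ w^\mu)\cdot\partial_\mu w^\mu$, which lands in $\Cscr^{(k,\alpha)}$ rather than $\Cscr^{(k+1,\alpha)}$. This is exactly the loss of $2l$ derivatives the paper warns about when passing to fibrewise holomorphic coordinates, and the citations to Norton and Bojarski et al.\ only give regularity of the inverse of a single diffeomorphism, not parametric smoothness of $\mu\mapsto(w^\mu)^{-1}$. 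The paper avoids all of this by never composing with $w^\mu$: it writes $f=P(\phi)$, so the reduced equation becomes $(I-\mu S)\phi=(1-|\mu|^2)\overline{g_\mu}u_\mu$ in the \emph{fixed} space $\Cscr^{(k,\alpha)}(\overline\Omega)$, solved by the analytic operator family $(I-\mu S)^{-1}$ and the fixed bounded operator $P$, which gains one derivative with no loss in $\mu$. You should replace your moving-domain step by this direct inversion.
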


\begin{proof}
We begin by recalling some
technical tools from \cite[Secs.\ 3-4]{Forstneric2024Runge}.
Choose a $J$-holomorphic immersion $z:X\to\C$.  
There is a Cauchy kernel on $(X,J)$ which determines on any 
smoothly bounded domain $\Omega\Subset X$ a pair 
of bounded linear operators 
$P:\Cscr^{(k,\alpha)}(\overline \Omega) \to \Cscr^{(k+1,\alpha)}(\overline \Omega)$ and 
$S:\Cscr^{(k,\alpha)}(\overline \Omega) \to \Cscr^{(k,\alpha)}(\overline \Omega)$
($k\in\Z_+$, $0<\alpha<1$) such that 
for any $\phi\in \Cscr^{(k,\alpha)}(\overline \Omega)$, the 
Cauchy operator $P$ solves the equation
$P(\phi)_{\bar z}=\phi$ on $\overline \Omega$ and 
the Beurling operator $S$ is given by $S(\phi)=P(\phi)_z$. 
The properties of these operators are summarised in 
\cite[Theorem 3.2]{Forstneric2024Runge}. 
Although the cited result is stated for domains $\Omega$
with $\Cscr^\infty$ boundaries, it is clear from the proof and 
\cite[Lemma 6.37]{GilbargTrudinger1983} that it holds 
if $b\Omega$ is of class $\Cscr^{(k,\alpha)}$ if $k\ge 1$, 
and of class $\Cscr^{(1,\alpha)}$ if $k=0$.

By \cite[Theorem 4.1]{Forstneric2024Runge} there are a constant 
$c>0$ and a function $h:B_c \times \overline \Omega\to \C$ such that
for every $\mu\in B_c$, $h_\mu=h(\mu,\cdotp):\overline \Omega\to\C$ 
is a $J_\mu$-holomorphic immersion of class 
$\Cscr^{(k+1,\alpha)}(\overline \Omega)$ depending analytically on $\mu$.
We recall the proof since we shall use a similar idea 
in the sequel. The function $f=h_\mu$ must solve 
the Beltrami equation $f_{\bar z} = \mu f_z$.
We look for a solution in the form $f=z|_{\overline \Omega} +P(\phi)$ with
$\phi\in \Cscr^{(k,\alpha)}(\overline \Omega)$.
Note that $\phi=0$ corresponds to $f=z|_{\overline\Omega}$. 
We have that 
\[
	f_{\bar z}= P(\phi)_{\bar z}  = \phi
	\quad \text{and} \quad 
	f_z = 1 + P(\phi)_z = 1+ S(\phi).
\]
Inserting in the Beltrami equation $f_{\bar z} = \mu f_z$ gives $(I - \mu S) \phi =\mu$,
where $I$ denotes the identity map on $\Cscr^{(k,\alpha)}(\overline\Omega)$. 
For $\|\mu\|_{k,\alpha}$ small enough we have that $\|\mu S\|_{k,\alpha}<1$, 
so the operator $I- \mu S$ is invertible and its bounded inverse  
depends analytically on $\mu$: 
\[ 
	\Theta(\mu) := (I - \mu S)^{-1} =
	\sum_{j=0}^\infty (\mu S)^j 
	\in  \mathrm{Lin}(\Cscr^{k,\alpha}(\overline\Omega)). 
\] 
(See Mujica \cite[29.3 Theorem]{Mujica1986}.)
Hence, $\phi=\Theta(\mu)\mu$ and we get 
the following solution $h_\mu=f$ to the 
Beltrami equation $f_{\bar z} = \mu f_z$ on $\overline\Omega$:
\[ 
	h_\mu = z|_{\overline\Omega} +  P (\Theta(\mu) \mu) 
	= z|_{\overline\Omega} + P((I - \mu S)^{-1} \mu)
	 \in \Cscr^{(k+1,\alpha)}(\overline\Omega).
\] 
Note that $h_\mu$ depends analytically on $\mu$. For $\|\mu\|_{k,\alpha}$ 
small enough, $h_\mu$ is so close to $h_0=z|_{\overline\Omega}$ in
$\Cscr^{(k+1,\alpha)}(\overline\Omega)$ that it is an immersion.
Hence, for $c>0$ small enough, 
$\{\theta_\mu=dh_\mu\}_{\mu\in B_c}$
is a family of nowhere vanishing holomorphic 
$1$-forms on $(\Omega,J_b)$ of class 
$\Cscr^{(k,\alpha)}(\overline\Omega)$ with analytic dependence on $\mu$. 
(The existence of a family $\{\theta_b\}_{b\in B}$ of globally defined nowhere vanishing holomorphic $1$-forms on $(X,J_b)$ of class $\Cscr^{l,(k,\alpha)}$ was proved in \cite[Theorem 7.1]{Forstneric2024Runge} under a stronger assumption on the parameter space $B$.) 
The conjugate $\bar \theta_\mu=d\overline{h_\mu}$ is a nowhere vanishing 
antiholomorphic $(0,1)$-form with respect to $J_\mu$ for every 
$\mu\in B_c$. Thus, every family $\{\beta_\mu\}_{\mu\in B_c}$ 
on $\overline\Omega$, where $\beta_\mu$ is a $(0,1)$-form 
with respect to $J_\mu$, is of the form 
$\beta_\mu=u_{\mu}\, d\overline{h_\mu}$ for a family of functions 
$u_\mu: \overline\Omega\to \C$, $\mu\in B_c$. If the family 
$\{\beta_\mu\}_{\mu\in B_c}$ is of class 
$\Cscr^{l,(k,\alpha)}(B_c\times\overline\Omega)$
then $\{u_\mu\}_{\mu\in B_c}$ is of the same class, and vice versa.

We shall express the $\dibar_{J_\mu}$-equation \eqref{eq:dibar2} as a 
nonhomogeneous Beltrami equation with respect to the immersion $z$.
For $\mu\in B_c$ we can uniquely express any complex $1$-form 
$\beta$ on $\overline\Omega$ as 
\begin{equation}\label{eq:beta}
	\beta=A dz+Bd\bar z = A_\mu dh_\mu + B_\mu d\overline{h_\mu}.
\end{equation}
Note that $\beta$ is of class $\Cscr^{(k,\alpha)}(\overline\Omega)$
if and only if the coefficients $A,B,A_\mu,B_\mu:\overline\Omega\to\C$
are of this class. We now express $A_\mu$ and $B_\mu$ in terms of 
the functions $A,B,\mu$, and 
\begin{equation}\label{eq:gmu}	
	g_\mu := (h_\mu)_z \in \Cscr^{(k,\alpha)}(\overline\Omega).
\end{equation}
We have that 
\[
	dh_\mu = (h_\mu)_z \, dz + (h_\mu)_{\bar z} \, d\bar z
	= g_\mu\, dz + \mu g_\mu \, d\bar z
\]
where the second identity follows from the Beltrami equation
$(h_\mu)_{\bar z}=\mu (h_\mu)_z$. From this and \eqref{eq:beta} it follows that
\begin{eqnarray*}
	A dz+Bd\bar z &=& A_\mu (g_\mu dz + \mu g_\mu d\bar z)
	+ B_\mu (\overline {\mu g_\mu}\, dz + \overline{g_\mu} \, d\bar z) \\
	&=& (A_\mu g_\mu + B_\mu \overline{\mu g_\mu}) dz
	+ (A_\mu \mu g_\mu + B_\mu \overline{g_\mu}) d\bar z
\end{eqnarray*}
and hence
\[
	A = A_\mu g_\mu + B_\mu \overline{\mu g_\mu},
	\qquad
	B = A_\mu \mu g_\mu + B_\mu \overline{g_\mu}.
\]
Solving these equations on $A_\mu$ and $B_\mu$ gives
\[
	A_\mu = \frac{A  - \bar \mu B}{(1-|\mu|^2)g_\mu}, 
	\qquad 
	B_\mu = \frac{B  - \mu A}{(1-|\mu|^2) \overline{g_\mu}}.
\]
Taking $\beta=df$ we have $A=f_z$, $B=f_{\bar z}$, $A_\mu = f_{h_\mu}$, 
and $B_\mu= f_{\overline{h_\mu}}$. Inserting these quantities in 
the above expression for $B_\mu$ shows that the 
nonhomogeneous Cauchy--Riemann equation 
\[
	\dibar_{J_\mu} f = \beta_\mu = u_\mu d\overline{h_\mu} 
	\ \Longleftrightarrow \ 
	f_{\overline{h_\mu}} = 
	\frac{f_{\bar z}-\mu f_z} {(1-|\mu|^2) \overline{g_\mu}} = u_\mu
\]
is equivalent to the nonhomogeneous Beltrami equation 
\begin{equation}\label{eq:dibarmu}
	f_{\bar z}-\mu f_z = (1-|\mu|^2) \overline{g_\mu} \, u_\mu
\end{equation}
with $g_\mu$ given by \eqref{eq:gmu}. 
The regularity conditions in the proposition imply that the right hand side is of class $\Cscr^{l,(k,\alpha)}$ on $B_c\times \overline\Omega$. 
We look for a solution of \eqref{eq:dibarmu} in the form $f=f(\mu)=P(\phi)$ with 
$\phi\in \Cscr^{(k,\alpha)}(\overline\Omega)$ to be determined. 
Inserting $f_{\bar z}=P(\phi)_{\bar z}=\phi$ and $f_{z}=P(\phi)_z=S(\phi)$
into \eqref{eq:dibarmu} gives
\[
	f_{\bar z}-\mu f_z = (I-\mu S)\phi = 
	(1-|\mu|^2) \overline{g_\mu} \, u_\mu.
\]
For $\|\mu\|_{k,\alpha}$ small enough the operator $I-\mu S$ is
invertible and we obtain 
\[
	\phi=\phi_\mu = 
	(I-\mu S)^{-1}\left((1-|\mu|^2) \overline{g_\mu} \, u_\mu \right).
\] 
Since the bounded linear operator 
$(I-\mu S)^{-1}\in \mathrm{Lin}(\Cscr^{k,\alpha}(\overline \Omega))$
is analytic in $\mu$ and $(1-|\mu|^2) \overline{g_\mu} \, u_\mu 
\in \Cscr^{l,(k,\alpha)}(B_c\times\overline \Omega)$, the map
$(\mu,x)\mapsto \phi_\mu(x)$ belongs to
$\Cscr^{l,(k,\alpha)}(B_c\times\overline \Omega)$.
Finally, the solution of \eqref{eq:dibarmu} is $f_\mu = P(\phi_\mu)$,  
and the map $(\mu,x)\to f_\mu(x)$ belongs to
$\Cscr^{l,(k+1,\alpha)}(B_c\times \overline \Omega)$. 
\end{proof}

This completes the proof of Theorem \ref{th:dibar}. 
\end{proof}

\begin{proof}[Proof of Corollary \ref{cor:dibar}]
Global solvability of \eqref{eq:dibar1} is 
obtained by exhausting $X$ by an increasing family 
of relatively compact, smoothly bounded Runge domains, solving the 
equation \eqref{eq:dibar1} on each of them by using
Theorem \ref{th:dibar}, and applying the Runge 
approximation theorem on families of open Riemann
surfaces (see \cite[Theorem 1.1]{Forstneric2024Runge})  
at every step of the induction to ensure convergence of solutions. 
The straightforward details can be found in the preprint 
\cite[V3, proof of Theorem 9.1]{Forstneric2024Runge}.
One essentially follows the standard scheme in the proof of 
Cartan's Theorem B \cite[Section VIII.14]{GunningRossi1965}.
\end{proof}

%
%
\begin{remark}
\label{rem:Cartan}
(A) If $B$ is a manifold of class $\Cscr^l$ with 
$0<l\le k+1$ then the space $Z=B\times X$ in 
Theorem \ref{th:dibar}, endowed with an atlas 
of class $\Cscr^{l,(k+1,\alpha)}$ given by 
\cite[Theorem 4.1]{Forstneric2024Runge},
is a {\em mixed manifold} of class $\Cscr^l$
in the sense of 
Jurchescu \cite{Jurchescu1979,Jurchescu1988RRMPA},
and a Levi-flat CR manifold of CR-dimension one in the 
sense of Cauchy--Riemann geometry \cite{BaouendiEbenfeltRothschild1999}. 
(Mongodi and Tomassini \cite{MongodiTomassini2019} also call 
such manifolds {\em semiholomorphic foliations}. Apparently they
were not aware of Jurchescu's work.) 
In Jurchescu's papers, maps which are holomorphic on complex leaves of a mixed manifold are called {\em morphic}, while in CR geometry they are called CR maps. The Runge approximation theorem 
\cite[Theorem 1.1]{Forstneric2024Runge} 
shows that such $Z$ is also a {\em Cartan manifold} of class $\Cscr^{l}$ 
in the sense of \cite[Sect.\ 6]{Jurchescu1988RRMPA}.
Cartan manifolds are analogues of Stein manifolds in 
the category of mixed manifolds. 
%
%
Solvability of the tangential $\dibar$-complex on $\Cscr^\infty$  
smooth Cartan manifolds was shown by Jurchescu in
\cite[Sect.\ 3]{Jurchescu1994} by using sheaf-theoretic approach,
similar to the one in the classical theory of Stein manifolds. 

\noindent 
(B) There are results in the literature concerning the $\dibar$-equation 
on moving families of domains, also in higher dimensional complex manifolds; 
see Diederich and Ohsawa \cite{DiederichOhsawa1991}, 
Cho and Choi \cite{ChoChoi2008}, 
Gong and Kim \cite[Theorem 4.5]{GongKim2018},
Simon \cite{Simon2019JGA}, 
Kruse \cite{Kruse2020}, 
among others. By pulling back a complex structure by a
family of diffeomorphisms, the case of moving domains
is changed to the variation of the complex structure on a fixed domain. 
Conversely, Hamilton's theorem \cite{Hamilton1977} 
(see also \cite[Theorem 1.13]{GreeneKrantz1982} 
for a simpler proof and \cite[Corollary 4.5]{Forstneric2024Runge} for 
domains in Riemann surfaces) shows that a small smooth
integrable variation of the complex structure on a 
smoothly bounded strongly pseudoconvex domain can be realised
by a smooth variation of the domain in the ambient Stein manifold.
In \cite{GreeneKrantz1982}, Greene and Krantz 
used methods introduced by Kohn \cite{Kohn1963,Kohn1964}
to study stability of the $\dibar$-Neumann operator and 
the canonical (Kohn) solution 
to the $\dibar$-equation under small integrable perturbations 
of a complex structure $J$ on a compact strongly 
$J$-pseudoconvex domain $\overline M$.
Assuming that the boundary $bM$ is of class $2s+5$ for some $s\ge 1$,
they obtained continuous dependence of the Neumann operator
$N_J$ in the Sobolev $L^2$-space $W^{s}$ 
under small variations of $J$ of class $\Cscr^{2s+5}$, and hence
continuous dependence of solutions of the $\dibar_J$-equation 
in $W^{s-1}$ \cite[Theorems 3.9, and 3.10]{GreeneKrantz1982}.
It is not clear whether these results are (close to) optimal, 
and we could not find results in the literature with better than continuous 
dependence of solutions on the complex structure.
\end{remark}

%
%

Theorem \ref{th:dibar} implies the vanishing of the 
Dolbeault cohomology on families of open Riemann surfaces. 
To explain this, assume that 
$B$, $X$, and $\{J_b\}_{b\in B}$ are as in Theorem \ref{th:dibar}, 
where the family $J_b$ is of class $\Cscr^{l,(k,\alpha)}$ for some 
$0\le l\le k+1$ and $0<\alpha<1$.  
Denote by $\Oscr$ the sheaf of germs of functions $f$ of class 
$\Cscr^{l}$ on $Z=B\times X$ which are fibrewise holomorphic,
that is, $f_b=f(b,\cdotp)$ is $J_b$-holomorphic for each $b\in B$. 
By \cite[Lemma 5.6]{Forstneric2024Runge},
$\Oscr$ is a subsheaf of the sheaf $\Cscr^{l,(k+1,\alpha)}$.
These are sheaves of unital abelian rings, in particular, of abelian groups. 

%
%
\begin{proposition}\label{prop:Dolbeault}
(Assumptions as above.) 
$H^q(Z,\Oscr)=0$ for all $q=1,2,\ldots$.
\end{proposition}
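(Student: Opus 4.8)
The plan is to present $\Oscr$ as the kernel in a length‑one resolution by the fibrewise Dolbeault complex and then run the abstract de Rham argument, using Theorem \ref{th:dibar} for local exactness and Corollary \ref{cor:dibar} for global surjectivity. Let $\Ascr$ denote the sheaf on $Z=B\times X$ whose sections over an open set $W$ are families $\{\beta_b\}$, with $\beta_b$ a $(0,1)$-form with respect to $J_b$ on $W_b=\{x\in X:(b,x)\in W\}$, of class $\Cscr^{l,(k,\alpha)}$ (the condition tested over products $B_0\times\overline\Omega\Subset W$ with $\Omega$ smoothly bounded). By \eqref{eq:derivatives} and \cite[Lemma 5.6]{Forstneric2024Runge}, the assignment $f\mapsto\dibar f:=(\dibar_{J_b}f_b)_{b\in B}$ is a morphism of sheaves of abelian groups $\dibar\colon\Cscr^{l,(k+1,\alpha)}\to\Ascr$ whose kernel is precisely $\Oscr$. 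I would first verify that the sequence
\[
	0\lra\Oscr\lra\Cscr^{l,(k+1,\alpha)}\stackrel{\dibar}{\lra}\Ascr\lra 0
\]
is exact: exactness at the middle term is the definition of $\Oscr$, while surjectivity of $\dibar$ as a sheaf map is local solvability of the $\dibar$-equation. For the latter I would fix $(b_0,x_0)\in Z$, represent a germ of $\Ascr$ there by a family $\{\beta_b\}$ on $B_0\times\Omega$ with $B_0\ni b_0$ open and $\Omega\ni x_0$ a smoothly bounded coordinate disc in $X$ (whose boundary, being smooth, is of class $\Cscr^{(k+1,\alpha)}$), and appeal to Theorem \ref{th:dibar} — more precisely to the local‑in‑$b$ solvability established in its proof via Proposition \ref{prop:dibar}, after possibly shrinking $B_0$ — to obtain $f\in\Cscr^{l,(k+1,\alpha)}(B_0\times\overline\Omega)$ with $\dibar f=\{\beta_b\}$ there.

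Next I would show that the two sheaves in the resolution are acyclic on $Z$. Since $X$ is a (paracompact) smooth surface and $B$ is paracompact Hausdorff when $l=0$ (resp.\ a paracompact $\Cscr^l$ manifold when $l>0$), the space $Z$ is paracompact and admits partitions of unity of class $\Cscr^{l,\infty}$, i.e.\ smooth in the space variable and $\Cscr^l$ in the parameter; multiplying a section of $\Cscr^{l,(k+1,\alpha)}$ or of $\Ascr$ by such a cut‑off keeps it in the same class, so both sheaves are modules over a sheaf of rings admitting partitions of unity, hence fine, hence $H^q\bigl(Z,\Cscr^{l,(k+1,\alpha)}\bigr)=H^q(Z,\Ascr)=0$ for all $q\ge 1$. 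As the resolution above has length one, the abstract de Rham theorem then gives $H^q(Z,\Oscr)=0$ for all $q\ge 2$ together with an isomorphism
\[
	H^1(Z,\Oscr)\;\cong\;\Gamma(Z,\Ascr)\big/\dibar\,\Gamma\bigl(Z,\Cscr^{l,(k+1,\alpha)}\bigr).
\]

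Finally, a global section of $\Ascr$ is exactly a family $\{\beta_b\}_{b\in B}$ of $(0,1)$-forms $\beta_b\in\Gamma(X,T^{*(0,1)}_{J_b}X)$ of class $\Cscr^{l,(k,\alpha)}$, so Corollary \ref{cor:dibar} — applicable because the standing hypothesis $0\le l\le k+1$ is assumed here — produces $f\in\Cscr^{l,(k+1,\alpha)}(Z)$ with $\dibar f=\{\beta_b\}$; hence the quotient above vanishes and $H^1(Z,\Oscr)=0$, which completes the argument. I expect the only genuine work to be bookkeeping rather than a conceptual obstacle: one must check that the mixed‑regularity objects $\Cscr^{l,(k+1,\alpha)}$ and $\Ascr$ really are sheaves under the "tested on relatively compact pieces" definition and are fine (so that multiplication by $\Cscr^{l,\infty}$ partitions of unity preserves the Hölder class and $Z$ carries enough of them), and that Theorem \ref{th:dibar} indeed yields sheaf‑level surjectivity of $\dibar$ on arbitrarily small product neighbourhoods $B_0\times\Omega$. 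Everything else is the standard Dolbeault/Cartan–B mechanism, simplified by the fibres being one‑dimensional so that the resolution stops at $(0,1)$-forms.
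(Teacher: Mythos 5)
Your proof is correct and follows essentially the same route as the paper: the length-one fine resolution $0\to\Oscr\to\Cscr^{l,(k+1,\alpha)}\stackrel{\dibar}{\to}\Cscr^{l,(k,\alpha)}_{(0,1)}\to 0$, exactness from Theorem \ref{th:dibar}, acyclicity of the fine sheaves, and then killing $H^1$ by global solvability. Your explicit appeal to Corollary \ref{cor:dibar} (rather than Theorem \ref{th:dibar}) for the vanishing of the quotient $\Gamma(Z,\Cscr^{l,(k,\alpha)}_{(0,1)})/\dibar\,\Gamma(Z,\Cscr^{l,(k+1,\alpha)})$ is in fact the more precise citation, since that is where the standing hypothesis $l\le k+1$ is used.
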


\begin{proof}
Consider the sequence of homomorphisms of sheaves of abelian groups
\begin{equation}\label{eq:resolution}
	0\lra \Oscr \longhookrightarrow \Cscr^{l,(k+1,\alpha)}
	\stackrel{\dibar}{\lra} \Cscr^{l,(k,\alpha)}_{(0,1)} \lra 0,
\end{equation}
where $\Cscr^{l,(k,\alpha)}_{(0,1)}$ is the sheaf of germs 
of $(0,1)$-forms of class $\Cscr^{l,(k,\alpha)}$ on the fibres 
$Z_b=(X,J_b)$ and $\dibar$ is the operator 
which equals $\dibar_{J_b}$ on $Z_b$ for every $b\in B$. 
By Theorem \ref{th:dibar} the sequence \eqref{eq:resolution} is exact.
The second and the third sheaf in \eqref{eq:resolution} 
are fine sheaves (they admit partitions of unity), so they are acyclic,
i.e., their cohomology groups of order $\ge 1$ vanish. It follows that
\[ 
	H^1(Z,\Oscr)= \Gamma\big(Z,\Cscr^{l,(k,\alpha)}_{(0,1)}\big)/
	\dibar \, \Gamma\big(Z,\Cscr^{l,(k+1,\alpha)}\big)
\] 
and $H^q(Z,\Oscr)=0$ for $q\ge 2$ 
(see \cite[Chapter VI]{GunningRossi1965}). 
Here, $\Gamma$ denotes the space of global sections of a sheaf. 
The quotient group on the right hand side above
vanishes by Theorem \ref{th:dibar}.
\end{proof}

%
%
%
\section{The Oka principle for line bundles
on families of open Riemann surfaces}

%
%
Every holomorphic vector bundle on an 
open Riemann surface is holomorphically trivial by the Oka--Grauert principle; see \cite{Oka1939,Grauert1958MA} 
and \cite[Theorem 5.3.1]{Forstneric2017E}.
We now show that Proposition \ref{prop:Dolbeault} implies
the Oka principle for complex line bundles on families
of open Riemann surfaces; see Theorem \ref{th:OPLB}.
We also discuss vector bundles of higher rank.

Let $B$ be a paracompact Hausdorff space, $X$ be a smooth
open surface, and $\{J_b\}_{b\in B}$ be a continuous family of complex
structures on $X$ of class $\Cscr^{\alpha}$ for some $0<\alpha<1$.
We let $Z=B\times X$ with the fibre $Z_b=\{b\}\times X$ endowed
with the complex structure $J_b$ for each $b\in B$.
Denote by $\Cscr=\Cscr_Z$ and $\Oscr=\Oscr_Z$
the sheaves of germs of continuous resp.\ 
fibrewise holomorphic functions on $Z=B\times X$.
These are sheaves of unital abelian rings, with $\Oscr_Z$ a subsheaf
of $\Cscr_Z$. Furthermore, $\Oscr^*\subset \Oscr$ and 
$\Cscr^*\subset \Cscr$ denote the subsheaves consisting 
of germs with nonzero values; 
these are sheaves of (multiplicative) abelian groups.
Likewise, for a complex Lie group $G$, $\Oscr_Z^G\subset \Cscr_Z^G$ 
denote the sheaves of germs of maps $Z\to G$ of respective classes; 
these are sheaves of groups. 

%
%
\begin{definition}\label{def:FHVB}
Let $X$, $\{J_b\}_{b\in B}$, and $Z=B\times X$ be 
as above, and denote by $Z_b=(X,J_b)$ the fibre of $Z$ over $b\in B$.
A topological complex vector bundle $E \to Z$
is said to be fibrewise holomorphic 
if for any $b\in B$ the restriction $E|Z_b\to Z_b$ is a holomorphic vector
bundle on the Riemann surface $(X,J_b)$.
\end{definition}

Let $Gl(r,\C)$ denote the group of invertible $r\times r$ matrices.
We have the following observation.

\begin{proposition}\label{prop:FHVB}
A complex vector bundle $E\to Z=B\times X$ of rank $r$
is fibrewise holomorphic if and only if it admits a transition cocycle 
consisting of sections of the sheaf $\Oscr_Z^{Gl(r,\C)}$.
\end{proposition}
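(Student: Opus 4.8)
The plan is to prove the equivalence by unwinding the definition of a transition cocycle and comparing the structure-group reductions on each fibre.

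\medskip

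\noindent\textbf{Sketch of the approach.}
First I would establish the easy implication. Suppose $E\to Z$ admits a transition cocycle $\{g_{ij}\}$ with $g_{ij}\in \Oscr_Z^{Gl(r,\C)}(U_i\cap U_j)$ for some open cover $\{U_i\}$ of $Z$. Fix $b\in B$. Restricting the cover to $Z_b=(X,J_b)$ gives an open cover $\{U_i\cap Z_b\}$ of $Z_b$, and the restricted transition maps $g_{ij}|_{U_i\cap U_j\cap Z_b}$ are $J_b$-holomorphic $Gl(r,\C)$-valued functions by the definition of $\Oscr_Z$. Hence they form a holomorphic cocycle on $(X,J_b)$, and the bundle they define is exactly $E|Z_b$; thus $E|Z_b$ is holomorphic for every $b$, i.e., $E$ is fibrewise holomorphic.

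\medskip

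\noindent\textbf{The converse, which is the main point.}
Now assume $E\to Z$ is fibrewise holomorphic. The bundle $E$, being a topological complex vector bundle of rank $r$ on the paracompact space $Z$, is given by a continuous transition cocycle $\{h_{ij}\}$ with $h_{ij}\in \Cscr_Z^{Gl(r,\C)}(U_i\cap U_j)$, after passing to a fine enough cover $\{U_i\}$ which I may take of the form $U_i=V_i\times W_i$ with $V_i\subset B$ and $W_i\subset X$ open. The goal is to replace this continuous cocycle by an equivalent one that is fibrewise holomorphic. Locally in the base, over a neighbourhood $V\subset B$ of a fixed $b_0$, the restriction $E|_{V\times X}$ can be refined so that the fibrewise-holomorphic condition, together with semiglobal $X$-holomorphic charts of class $\Cscr^{l,(k+1,\alpha)}$ from \cite[Theorem 4.1]{Forstneric2024Runge}, lets one build a fibrewise-holomorphic local frame: this is exactly the statement that a topological complex vector bundle which is holomorphic on every fibre admits, locally over the base and semiglobally over $X$, a frame which is $J_b$-holomorphic on each fibre. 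With such local fibrewise-holomorphic frames $\{e^{(i)}\}$ over sets $U_i=V_i\times X$ (or $V_i\times W_i$), the associated transition functions $g_{ij}$ are automatically $J_b$-holomorphic on each fibre, hence lie in $\Oscr_Z^{Gl(r,\C)}$, and they define a cocycle isomorphic (as a topological bundle) to $E$. The isomorphism class of the topological bundle is unchanged because passing from the frame defining $\{h_{ij}\}$ to the frame defining $\{g_{ij}\}$ is a continuous change of local frames, i.e., the two cocycles differ by a $\Cscr_Z^{Gl(r,\C)}$-coboundary.

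\medskip

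\noindent\textbf{Main obstacle.}
The genuinely nontrivial step is the existence of fibrewise-holomorphic local frames for a fibrewise-holomorphic topological vector bundle — equivalently, that a continuous $Gl(r,\C)$-valued cocycle whose restriction to each fibre is cohomologous (over that fibre) to a holomorphic one can be replaced, after refining and localizing in $b$, by a genuinely fibrewise-holomorphic cocycle. For $r=1$ this is immediate from the exponential sequence and Proposition \ref{prop:Dolbeault}: a continuous line-bundle cocycle $\{h_{ij}\}\in \Cscr_Z^*$ has, after refinement, continuous logarithms, and fibrewise-holomorphicity of the bundle lets one adjust these using the surjectivity of $\dibar$ on each fibre (cf. the splitting of $\Oscr^*$ provided by $H^1(Z,\Oscr)=0$) to obtain a cocycle in $\Oscr_Z^*$. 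For general $r$ one argues instead that the fibrewise-holomorphic structure is encoded by a $\dibar_{J_b}$-operator on $E|Z_b$ depending continuously (of class $\Cscr^l$) on $b$, and a partition-of-unity patching of local fibrewise-holomorphic frames — whose existence on each $U_i=V_i\times X$ follows by solving, with Theorem \ref{th:dibar} or the semiglobal holomorphic charts, for a holomorphic frame deforming continuously with $b$ — yields the desired cocycle. I would state the general-rank case only to the extent needed here (line bundles), and for $r>1$ defer to the discussion of higher-rank bundles that follows in the paper.
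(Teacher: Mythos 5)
Your first implication is fine and agrees with the paper. The converse, however, has a genuine gap exactly at the point you yourself flag as the ``main obstacle'': the existence of local frames that are continuous in $(b,x)$ and $J_b$-holomorphic in $x$. None of the mechanisms you propose actually produces such frames. For $r=1$, a continuous cocycle $\{h_{ij}\}$ (or its continuous logarithms) has no fibrewise $\dibar_{J_b}$, so there is nothing to ``adjust using the surjectivity of $\dibar$ on each fibre''; the fibrewise holomorphic structure of $E$ is given abstractly on each restriction $E|Z_b$, not through the chosen continuous trivialisations, and Definition \ref{def:FHVB} gives you no differentiability to work with. Moreover, the exponential-sequence argument only compares the cohomology sets $H^1(Z,\Oscr^*)$ and $H^1(Z,\Cscr^*)$, i.e., isomorphism classes; invoking it here presupposes the identification of fibrewise holomorphic line bundles with $\Oscr^*$-cocycles, which is precisely what the proposition is meant to establish (and what the paper then uses to define $\Pic(Z)$ before Theorem \ref{th:OPLB}). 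For general $r$, the phrase ``a $\dibar_{J_b}$-operator on $E|Z_b$ depending continuously on $b$'' imports a continuity hypothesis not present in the definition, and a partition-of-unity patching of frames destroys both holomorphicity and pointwise invertibility, so it cannot yield a cocycle in $\Oscr_Z^{Gl(r,\C)}$.

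The paper closes this gap with two ingredients you do not use. First, by \cite[Corollary 4.5]{Forstneric2024Runge} (a Hamilton-type straightening), near any $b_0$ there is a continuous family of $(J_b,J_{b_0})$-biholomorphisms $\Phi_b$ of a fixed smoothly bounded domain, which converts the variable structures $J_b$ into the single structure $J_{b_0}$; the pulled-back bundle is then a continuous family of $J_{b_0}$-holomorphic bundles over a contractible domain, each holomorphically trivial. Second, Leiterer's stability theorem supplies $J_{b_0}$-holomorphic trivialisations depending continuously on $b$ after shrinking the base neighbourhood; pulling these back by $\Phi_b$ gives the desired fibrewise holomorphic local frame. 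Your outline would need to be rebuilt around these (or equivalent) tools; as written, the key step is asserted rather than proved.
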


In the case of line bundles we have $r=1$ and $Gl(r,\C)=\C^*$, 
so $\Oscr^{Gl(r,\C)}=\Oscr^*$.

\begin{proof}
A vector bundle $E\to Z$ which admits a transition cocycle 
consisting of sections of the sheaf $\Oscr_Z^{Gl(r,\C)}$ is 
fibrewise holomorphic by the definition of the sheaf $\Oscr_Z$.

To prove the converse, it suffices to show that every point $z_0=(b_0,x_0)\in Z$
has a neighbourhood $U\subset Z$ such that the restricted
bundle $E|U$ admits $r=\mathrm{rank}\, E$ linearly independent
continuous fibrewise holomorphic sections. Such $r$-tuples of 
sections trivialise the bundle over $U$, and the transition maps 
between such local frames are fibrewise holomorphic maps into $Gl_r(\C)$.

To find such local frames, fix a point $(b_0,x_0)\in Z$ and 
smoothly bounded relatively compact domains 
$\Omega\Subset \Omega'\Subset X$ such that $\Omega$ is a
contractible neighbourhood of $x_0$.
By \cite[Corollary 4.5]{Forstneric2024Runge} there are a neighbourhood 
$B_0\subset B$ of $b_0$ and a continuous map 
$\Phi:B_0\times \Omega'\to B_0\times X$ of the form 
\begin{equation}\label{eq:Hamilton}
	\Phi(b,x)=(b,\Phi_b(x)),\quad b\in B_0,\ x\in \Omega' 
\end{equation}
such that $\Phi_{b_0}$ is the identity on $\Omega'$ and 
$\Phi_b:\Omega'\to\Phi_b(\Omega')\subset X$ 
is a $(J_b,J_{b_0})$-biholomorphic 
map in $\Cscr^{(1,\alpha)}(\Omega',X)$ which is continuous 
with respect to $b\in B_0$. Shrinking $B_0$ around $b_0$, 
we may assume that $\overline \Omega \subset \Phi_b(\Omega')$
for all $b\in \overline{B_0}$. The inverse map $\Psi=\Phi^{-1}$ is a
homeomorphism defined on $\overline{B_0 \times  \Omega}$,
and $\Psi_b=\Psi(b,\cdotp)=\Phi_b^{-1}$ is $(J_{b_0},J_b)$-holomorphic 
for every $b\in \overline{B_0}$. Hence, the vector bundle 
$E'=\Psi^*(E)| \,\overline{B_0 \times  \Omega}\to \overline{B_0 \times  \Omega}$ is fibrewise
holomorphic with respect to the complex structure $J_{b_0}$.
Since $\overline \Omega$ is contractible, the restricted bundle
$E'_b=E'| \{b\}\times \overline\Omega$ is holomorphically trivial 
with respect to $J_{b_0}$ for every $b\in \overline B_0$. 
Furthermore, by the stability theorem
of Leiterer \cite[Theorem 2.7, p.\ 68]{Leiterer1990}, shrinking 
$B_0$ around $b_0$ there is family 
of $J_{b_0}$-holomorphic trivialisations depending continuously on 
$b\in B_0$. This gives a local frame for the bundle $E'$ on
a neighbourhood of $(b_0,x_0)$
consisting of continuous fibrewise $J_{b_0}$-holomorphic sections. 
Their pullbacks by the map $\Phi$ \eqref{eq:Hamilton} is a 
local frame for $E$ around $(b_0,x_0)$ consisting of fibrewise 
holomorphic sections with respect to the variable complex
structures $J_b$, $b\in B_0$. This completes the proof.
\end{proof}

Denote by $\Pic(Z)\cong H^1(Z,\Oscr^*)$ the set of isomorphism 
classes of fibrewise holomorphic line bundles on $Z=B\times X$. 
We have the following Oka principle.

%
%
\begin{theorem}\label{th:OPLB}
(Assumptions as above.) 
Every topological complex line bundle on $Z=B\times X$ 
is isomorphic to a fibrewise holomorphic line bundle, 
and any two fibrewise holomorphic line bundles on $Z$
which are topologically isomorphic are also
isomorphic as fibrewise holomorphic line bundles.
Furthermore, $\Pic(Z)\cong H^2(Z,\Z)$.
\end{theorem}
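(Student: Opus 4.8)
The plan is to deduce Theorem~\ref{th:OPLB} from Proposition~\ref{prop:Dolbeault} and the analogous vanishing for the sheaf of \emph{continuous} functions by the standard exponential-sheaf-sequence argument, exactly as in the classical Oka--Grauert proof for line bundles on Stein manifolds (see \cite[Section VIII.14]{GunningRossi1965} or \cite[Theorem 5.2.2]{Forstneric2017E}). First I would write down the two short exact sequences of sheaves of abelian groups on $Z=B\times X$ obtained from the exponential map $f\mapsto \exp(2\pi\imath f)$, namely
\begin{equation*}
	0\lra \Z \lra \Oscr \stackrel{\exp}{\lra} \Oscr^* \lra 0
	\qquad\text{and}\qquad
	0\lra \Z \lra \Cscr \stackrel{\exp}{\lra} \Cscr^* \lra 0,
\end{equation*}
together with the inclusion morphism from the first sequence into the second. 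Exactness of the first sequence requires that a fibrewise holomorphic nowhere-zero function on a small (say, product of a ball in $B$ with a disc in $X$) neighbourhood admits a fibrewise holomorphic logarithm; this is immediate since one may choose a continuous branch of the logarithm pointwise and it is automatically $J_b$-holomorphic on each fibre, being locally the composition of a holomorphic function with a holomorphic branch of $\log$. Exactness of the second sequence is the classical topological fact (local contractibility suffices).

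Next I would pass to the long exact cohomology sequences and compare them. Since $\Cscr$ is a fine sheaf (it admits continuous partitions of unity, $B$ being paracompact Hausdorff), $H^q(Z,\Cscr)=0$ for $q\ge 1$, so the second long exact sequence gives $H^1(Z,\Cscr^*)\cong H^2(Z,\Z)$ and $H^q(Z,\Cscr^*)\cong H^{q+1}(Z,\Z)$ for $q\ge 1$; in particular $H^1(Z,\Cscr^*)\cong H^2(Z,\Z)$ classifies topological complex line bundles on $Z$. By Proposition~\ref{prop:Dolbeault} we have $H^q(Z,\Oscr)=0$ for $q\ge 1$, and the same argument applied to the first sequence yields $H^1(Z,\Oscr^*)\cong H^2(Z,\Z)$ as well, with the connecting map being the first Chern class $c_1$. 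The naturality square coming from the morphism of short exact sequences shows that the isomorphism $\Pic(Z)=H^1(Z,\Oscr^*)\to H^2(Z,\Z)$ factors as: forget to the topological class in $H^1(Z,\Cscr^*)$, then apply the topological $c_1$. Chasing this diagram gives simultaneously that the forgetful map $H^1(Z,\Oscr^*)\to H^1(Z,\Cscr^*)$ is bijective (both are identified with $H^2(Z,\Z)$ compatibly), which is precisely the two assertions of the theorem: surjectivity says every topological line bundle is fibrewise holomorphically realisable, injectivity says fibrewise holomorphic line bundles that are topologically isomorphic are fibrewise holomorphically isomorphic. The final claim $\Pic(Z)\cong H^2(Z,\Z)$ is then the displayed isomorphism.

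The one genuinely nontrivial input, beyond bookkeeping with long exact sequences, is the vanishing $H^q(Z,\Oscr)=0$ for $q\ge 1$, which is Proposition~\ref{prop:Dolbeault} and ultimately rests on Theorem~\ref{th:dibar}; everything else is formal. The main point requiring a word of care is the exactness of the exponential sequence for $\Oscr$ at the $\Oscr^*$ stalk, i.e.\ the local existence of fibrewise holomorphic logarithms, but as indicated this follows because a fibrewise holomorphic function with values in $\C^*$ on a fibrewise simply connected neighbourhood has, fibrewise, a holomorphic logarithm, and choosing the branches consistently in the parameter (using that the neighbourhood is a product and the function is continuous in $b$) gives a section of $\Oscr$ of the required regularity class. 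I would also remark that one should work with the Čech cohomology of a good cover, or equivalently invoke that $B\times X$ is paracompact so sheaf and Čech cohomology agree, in order to run the long exact sequence machinery; no additional difficulty arises there. Should one wish the higher-rank statement, the same scheme does not literally apply because $Gl(r,\C)$ is nonabelian, and one instead needs a nonabelian Oka principle in the spirit of Grauert, which the paper presumably treats separately via the Oka property and the results of Leiterer invoked in the proof of Proposition~\ref{prop:FHVB}; but for line bundles the abelian argument above is complete.
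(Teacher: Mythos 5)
Your proposal is correct and follows essentially the same route as the paper: the two exponential sheaf sequences for $\Oscr$ and $\Cscr$, vanishing of $H^q(Z,\Cscr)$ by fineness and of $H^q(Z,\Oscr)$ by Proposition~\ref{prop:Dolbeault}, and a comparison of the resulting long exact sequences. Your extra remark on the local existence of fibrewise holomorphic logarithms (exactness at $\Oscr^*$) is a point the paper leaves implicit, and is a welcome clarification rather than a deviation.
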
 

It follows in particular that if $B$ is contractible then $\Pic(Z)=0$.

\begin{proof}
The proof follows the standard argument for complex line
bundles on a Stein manifold, due to Oka 
\cite{Oka1939}; see \cite[Theorem 5.2.2]{Forstneric2017E}. 
Let $\sigma(f)= \E^{2\pi \I f}$. Consider the following
commutative diagram whose rows are 
exponential sheaf sequences and whose vertical arrows are 
the natural inclusions:
\begin{equation} \label{eq:expo-sheaf}
\begin{array}{*{9}c}
	0 & \lra & \Z  & \longhookrightarrow & \Oscr 
	& \stackrel{\sigma}{\lra} & \Oscr^*  & \lra & 1 \\
 	&    & \big\downarrow &  & \big\downarrow &                 
	& \!\!\big\downarrow & &  \\
	0 & \lra & \Z & \longhookrightarrow & \Cscr  & \stackrel{\sigma}{\lra} 
	& \Cscr^*  & \lra & 1 \\
\end{array}
\end{equation}
Since $\Cscr$ is a fine sheaf, we have  $H^q(Z,\Cscr)=0$ for all $q\in\N$.
By Proposition \ref{prop:Dolbeault} we also have $H^q(Z,\Oscr)=0$
for all $q\in\N$. Hence, the relevant part of 
the long exact sequence of cohomology groups associated 
to the diagram \eqref{eq:expo-sheaf} gives
\[ 
\begin{array}{*{8}c}
 	0  & \lra & H^1(Z,\Oscr^*) & \lra & H^2(Z;\Z) & \lra & 0 \\
        &  & \big\downarrow & & \big\Vert &  &    \\
        0   & \lra & H^1(Z,\Cscr^*) & \lra & H^2(Z;\Z) & \ \, \lra & 0 \\
\end{array}
\]
Thus, all arrows in the central square are isomorphisms. 
Since $\Pic(Z)\cong H^1(Z,\Oscr^*)$ and 
$H^1(Z,\Cscr^*)$ is the set of isomorphisms classes
of topological line bundles on $Z$, the theorem follows.
\end{proof}

%
%
The Oka principle for maps from families of open Riemann surfaces
to Oka manifolds (see \cite[Theorem 1.6]{Forstneric2024Runge}) 
allows us to extend the first part of Theorem \ref{th:OPLB} 
to vector bundles of arbitrary rank by using the approach from 
the classical Oka--Grauert theory. However, the assumptions
on the parameter space $B$ must be more restrictive for 
the cited result to imply. We state the following special case 
when $B$ is a CW complex and refer to the discussion preceding 
\cite[Theorem 1.6]{Forstneric2024Runge} for more information.


%
%
\begin{theorem}\label{th:OPvectorbundles} 
Assume that $B$ is a finite CW complex or a countable locally 
compact CW-complex of finite dimension, $X$ 
is a smooth open surface, and $\{J_b\}_{b\in B}$ is  
a continuous family of complex structures on 
$X$ of local H\"older class $\Cscr^\alpha$ for some $0<\alpha<1$.
Then, every topological vector bundle on $B\times X$ is isomorphic 
to a fibrewise holomorphic vector bundle. 
\end{theorem}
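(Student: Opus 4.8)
The plan is to obtain this as the first (``easy'') half of the Grauert--Oka principle for vector bundles, reduced in the classical way to the Oka principle for maps into an Oka manifold, the latter being supplied here by \cite[Theorem 1.6]{Forstneric2024Runge} applied to a complex Grassmannian. Throughout write $Z=B\times X$ with fibres $Z_b=(X,J_b)$, and let $\gamma_{r,N}\subset\underline{\C}^N$ denote the tautological rank-$r$ subbundle over the Grassmannian $\mathrm{Gr}(r,N)$ of $r$-dimensional subspaces of $\C^N$.

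First I would fix a topological complex vector bundle $E\to Z$ of rank $r$ and record that $Z$ has the homotopy type of a finite-dimensional CW complex: a smooth open orientable surface is homotopy equivalent to a wedge of circles, hence to a $1$-dimensional CW complex, and $B$ is a CW complex of finite dimension under either hypothesis of the theorem, so $Z\simeq B\times(\text{$1$-complex})$ has CW dimension at most $\dim B+1$. Over a finite-dimensional CW complex every rank-$r$ topological complex vector bundle is a direct summand of a trivial bundle $\underline{\C}^N\to Z$ of some finite rank $N$ (the classifying map $Z\to\mathrm{Gr}(r,\infty)$ compresses into $\mathrm{Gr}(r,N)$ once $N$ is large relative to $\dim Z$, since the pair $(\mathrm{Gr}(r,\infty),\mathrm{Gr}(r,N))$ is highly connected; a Hermitian metric, available by paracompactness, then produces the complement). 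Equivalently, there is a continuous map $f\colon Z\to\mathrm{Gr}(r,N)$ with $E\cong f^{*}\gamma_{r,N}$.

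Next I would note that $\mathrm{Gr}(r,N)$ is a compact complex homogeneous manifold, hence an Oka manifold (for $r=1$ it is $\CP^{N-1}$), and that the parameter space $B$ satisfies exactly the hypotheses under which the Oka principle of \cite[Theorem 1.6]{Forstneric2024Runge} is available. That theorem then yields a fibrewise holomorphic map $\tilde f\colon Z\to\mathrm{Gr}(r,N)$ homotopic to $f$; that is, $\tilde f_b=\tilde f(b,\cdotp)\colon(X,J_b)\to\mathrm{Gr}(r,N)$ is holomorphic for every $b\in B$. Since $\gamma_{r,N}$ is a holomorphic vector bundle on $\mathrm{Gr}(r,N)$ and the restriction of $\tilde f^{*}\gamma_{r,N}$ to $Z_b$ is $\tilde f_b^{*}\gamma_{r,N}$, a holomorphic bundle on the Riemann surface $(X,J_b)$, the pullback $\tilde f^{*}\gamma_{r,N}\to Z$ is a fibrewise holomorphic vector bundle in the sense of Definition \ref{def:FHVB} (equivalently, by Proposition \ref{prop:FHVB}, it has a transition cocycle with values in $\Oscr_Z^{Gl(r,\C)}$). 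Homotopic continuous maps induce isomorphic pullback bundles, so $\tilde f^{*}\gamma_{r,N}\cong f^{*}\gamma_{r,N}\cong E$ as topological vector bundles, which is the assertion.

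The two ingredients that call for care are both standard: the bundle-theoretic fact that a topological vector bundle over a finite-dimensional CW complex embeds as a subbundle of a finite-rank trivial bundle (so that a \emph{finite} Grassmannian suffices and its tautological bundle can be pulled back), and --- the only genuinely substantial point --- checking that \cite[Theorem 1.6]{Forstneric2024Runge} does apply under the CW-complex hypotheses imposed on $B$ in the statement. It is precisely this parametric Oka principle, and the constraints it places on the parameter space, that forces the assumptions on $B$ to be stronger here than in Theorem \ref{th:OPLB}; everything else in the argument is formal.
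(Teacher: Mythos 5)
Your proposal is correct and follows essentially the same route as the paper: classify $E$ by a continuous map into a finite Grassmannian, invoke the parametric Oka principle of \cite[Theorem 1.6]{Forstneric2024Runge} (the Grassmannian being complex homogeneous, hence Oka) to deform it to a fibrewise holomorphic map, and pull back the universal bundle. Your justification for why a \emph{finite} Grassmannian suffices (finite CW dimension of $B\times X$) is in fact slightly more careful than the paper's appeal to paracompactness alone, but the argument is otherwise identical.
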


\begin{proof}
A topological vector bundle $E\to B \times X$ of rank $r$
is the pullback $f^*\U$ 
by a continuous map $f$ from $B \times X$ to a 
Grassmannian $G=Gr(r,N)$ (consisting of complex
$r$-planes in $\C^N$) of the universal rank $r$ vector bundle $\U\to G$.
(We take $N$ big enough such that $E$ embeds as a topological
vector subbundle of the trivial bundle $(B \times X)\times \C^N$;
this is possible since $B \times X$ is paracompact.)
Since $G$ is a complex homogeneous manifold, and hence an 
Oka manifold by Grauert \cite{Grauert1957II}, 
the Oka principle in \cite[Theorem 1.6]{Forstneric2024Runge}
shows that $f$ is homotopic to a fibrewise holomorphic map 
$F:B\times X\to G$ (i.e., such that $F(b,\cdotp):X\to G$ is $J_b$-holomorphic 
for every $b \in B$). The pullback $F^*\U\to B\times X$ is then a fibrewise 
holomorphic vector bundle topologically isomorphic to $f^*\U$. 
\end{proof}

%
%
\begin{remark}\label{rem:OPvectorbundles}
Under the assumptions in Theorem \ref{th:OPvectorbundles},
it is likely the case that any two fibrewise holomorphic vector bundles on 
$B\times X$ that are topologically isomorphic are also isomorphic 
as fibrewise holomorphic vector bundles.
The idea is to follow the classical case of this result for a single
complex structure on a Stein manifold $X$, due to Grauert
\cite{Grauert1958MA}; see also the expositions 
by Cartan \cite{Cartan1958}, Leiterer \cite{Leiterer1990}, and 
\cite[Theorem 5.3.1]{Forstneric2017E}. 
To complete the proof, we would need an Oka principle 
for sections of fibrewise holomorphic principal bundles
on $B\times X$, thereby extending the Oka principle
for maps from $B\times X$ to Oka manifolds
in \cite[Theorem 1.6]{Forstneric2024Runge}.
Unfortunately I was unable to handle certain technical 
issues concerning gluing of families of fibrewise holomorphic 
sections in this situation, so this problem must be postponed.

Note that Mongodi and Tomassini \cite{MongodiTomassini2019}  
obtained the Oka principle for more general CR vector bundles 
on certain real analytic Levi-flat submanifolds of complex 
Euclidean spaces by reducing the problem to the 
Oka--Grauert theorem \cite{Grauert1958MA}.
Our techniques use considerably less regularity.
\end{remark}

%
%
%
%
\medskip 
\noindent {\bf Acknowledgements.} 
Research was supported by the European Union 
(ERC Advanced grant HPDR, 101053085) and grants P1-0291 and 
N1-0237 from ARIS, Republic of Slovenia. 
I wish to thank Finnur L\'arusson for helpful conversations concerning 
Remark \ref{rem:OPvectorbundles}.



\end{document}